\theoremstyle{plain}
\newtheorem{thm}{Theorem}[section]
\newtheorem{prop}[thm]{Proposition}
\theoremstyle{remark}
\newcommand{\thmref}[1]{Theorem~\ref{#1}}
\newcommand{\R}{\mathbb{R}}
\newcommand{\C}{\mathbb{C}}
\renewcommand{\=}{\ \approx \ }
\renewcommand{\log}{\textrm{log}\,}
\newcommand{\pdo}{\psi{\rm do}}
\newcommand{\si}{^{-s}}
\newcommand{\noi}{\noindent}
\newcommand{\ol}{\overline}
\newcommand{\wh}{\widehat}
\newcommand{\x}{\times}
\newcommand{\ox}{\otimes}
\newcommand{\aand}{\mbox{and}}
\newcommand{\ffor}{\mbox{for}}
\newcommand{\bsh}{\backslash}
\newcommand{\Cc}{{\mathcal C}}
\newcommand{\Mm}{{\mathcal M}}
\newcommand{\rs}{{\textsf{r}}}
\newcommand{\vtwo}{\vskip 2mm}
\newcommand{\vfive}{\vskip 5mm}
\renewcommand{\r}{\right}
\renewcommand{\l}{\left}
\newcommand{\ii}{^{-1}}
\newcommand{\pr}{^{\prime}}
\renewcommand{\d}{\delta}
\newcommand{\D}{\Delta}
\renewcommand{\log}{\mbox{{\rm log}\,}}
\newcommand{\z}{\zeta}
\newcommand{\s}{\sigma}
\renewcommand{\b}{\beta}
\newcommand{\g}{\gamma}
\newcommand{\la}{\lambda}
\renewcommand{\d}{\delta}
\newcommand{\Z}{\mathbb{Z}}
\renewcommand{\O}{\Omega}
\newcommand{\oo}{\infty}
\newcommand{\End}{\mbox{\rm End}\,}
\newcommand{\rk}{\mbox{\rm rk}}
\newcommand{\tr}{\mbox{\rm tr\,}}
\newcommand{\Tr}{\mbox{\rm Tr\,}}
\newcommand{\TR}{\mbox{\textsf{\small  TR}}}
\renewcommand{\Re}{{\rm Re}}
\newcommand{\res}{\textrm{res}}
\numberwithin{equation}{section}
\begin{document} 

\title{ \textsc{Residue-Torsion and the Laplacian on Riemannian Manifolds}}

\author{Niccol\`{o} Salvatori and Simon Scott}
\date{}

\maketitle

{\large The Ray-Singer \cite{RaSi}  character formula
\begin{equation}\label{R torsion}
\log R_{X}(\rho)    = \frac{1}{2}\sum _{k= 0}^m (-1)^{k+1} k\, \tr(\log \Delta^c_k) 
\end{equation}
for computing the Reidemeister torsion  $R_X(\rho)$ of a closed oriented acyclic Riemannian manifold  $X$ with flat vector bundle $E_\rho\to X $ poses the question of whether  \eqref{R torsion}  may define analogous generalised torsion invariants for other elliptic complexes. Here, $E_\rho$ is specified by  a representation $\rho:  \pi_1(X ) \to O(n)$ and 
 $\Delta^c_k := \d_{k-1}\d_{k-1}^* + \d_k^* \d_k$ 
 is the combinatorial Laplacian of the acyclic chain complex $\d_k : C_k (\wh{X })\ox_{\,\R\pi_1(X )} \R^n \to C_{k-1} (\wh{X })\ox_{\,\R\pi_1(X )} \R^n$ over the universal cover $\wh{X }$, with logarithm operator
 \begin{equation}\label{cx power log}
 \log \Delta^c_k  := \frac{d}{dz} (\Delta^c_k)^z\left.\right|_{z=0}
 \end{equation}
where the complex power  $(\Delta^c_k)^z$ of the (strictly positive invertible finite-rank real) matrix $\Delta^c_k$ is defined canonically via holomorphic functional calculus.

Replicating \eqref{R torsion} for the de Rham complex $ \O^k(X ,E_\rho)  \stackrel{d_k}{\to} \O^{k+1}(X ,E_\rho)$
 with coefficients in $E_\rho$, Ray and Singer  \cite{RaSi} defined the generalised torsion 
\begin{equation}\label{analytic torsion}
\log T_{X}^{\,\z, {\bf k}}(\rho) := \frac{1}{2}\sum _{k= 0}^m (-1)^{k+1} k\, \TR_\z(\log \Delta_k)
\end{equation}
for the Hodge Laplacian $\Delta_k := d_{k-1} d^*_{k-1}  + d^*_k d_k :  \O^k(X ,E_\rho)\to  \O^k(X ,E_\rho)$ with  logarithm  $\log \Delta_k$ defined, similarly to \eqref{cx power log}, as the derivative of the complex power $\Delta_k^z$ and with  $\TR_\z$ the quasi-trace extension of the classical trace obtained by zeta function regularization.  The analytic torsion $\log T_{X}^{\,\z, {\bf k}}(\rho)$ was shown to be trivial for even-dimensional $X $  and in odd dimensions to be  a smooth invariant if the cohomology with coefficients in $E_\rho$ is trivial (acyclic). The general equality  ${T_{X}^{\,\z, {\bf k}}(\rho)  =R_X(\rho)}$ conjectured by Ray and Singer  \cite{RaSi}  was subsequently proved by Cheeger \cite{Ch} and by Muller \cite{Mu}.

In this paper we examine the generalised torsion 
\begin{equation}\label{residue torsion}
 \log T_{X}^{\,\res,  {\bf k}}(\rho)  := \frac{1}{2}\sum _{k= 0}^m (-1)^{k+1} k\, \res(\log \Delta_k)
\end{equation} 
of the de Rham complex  defined using the {\it residue trace} `$\res$' in place of the zeta function trace $\TR_\z$.  Residue torsion \eqref{residue torsion} is roughly complementary to analytic torsion \eqref{analytic torsion} --- it is trivial in odd dimensions while in even dimensions it is a non-trivial topological (in fact, homotopy) invariant  which can  be non-zero only when the cohomology is non-trivial (non acyclic). This is consistent with  the residue trace being roughly complementary to the classical (zeta) trace.  Residue trace invariants are relatively elementary, depending locally on only finitely many terms in the symbol expansion of a pseudodifferential operator, whilst the classical trace, and hence the zeta trace, is globally determined. The residue torsion is hence a far simpler invariant than the subtle and difficult to compute analytic (Reidemeister) torsion, and this is seen in the following exact identifications.

 \begin{thm}\label{thm one}
Let $X$ be a closed manifold of dimension $n$. Let $\b=(\b_0,\dots, \b_n)\in\R^{n+1}$. The generalised torsion 
$\log T_X^{\,\res, {\bf \b}}(\rho) := \frac{1}{2}\sum _{k= 0}^n (-1)^{k+1} \b_k\, 
{\rm res}(\log \Delta_k)$
is a smooth and topological invariant if and only if, up to a constant multiple, $\b_k=1$ for each $k$ or if $\b_k=k$ for each $k$. 
Writing $\b = {\bf 1}$ and $\b = {\bf k}$ for these cases, one has 
\begin{eqnarray}
\log T_X^{\,\res,  {\bf 1}}(\rho)   &=& {\rm rk}(E_\rho)\, \chi(X). \label{Tres1}\\
\log T_X^{\,\res,  {\bf k}}(\rho)  &=  &\frac{1}{2} \dim(X) \, {\rm rk}(E_\rho)\, \chi(X).\label{Tres2}
\end{eqnarray}
If the cohomology is acyclic 
 $ \log T_X^{\,\z, {\bf \b}}(\rho) := \frac{1}{2}\sum _{k= 0}^m (-1)^{k+1} \b_k  \mbox{{\rm \TR}}_\z(\log \Delta_k)$ is a  smooth and topological invariant if and only if, up to a constant, $\b = {\bf 1}$ or $\b = {\bf k}$ 
and one has 
\begin{eqnarray}
\log T_X^{\,\z, {\bf 1}}(\rho)   & =&  0.\label{Tz1} \\
\log T_X^{\,\z, {\bf k}}(\rho)  &\stackrel{\mbox{{\rm {\tiny \cite{RaSi},\cite{Ch},\cite{Mu}}}}}{=}&  \log R_X(\rho).\label{Tz2}
\end{eqnarray}
\end{thm}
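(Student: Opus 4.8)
\emph{Step 1: reduction to heat coefficients.} The plan is to collapse the residue torsion onto the constant ($t^0$) terms of the small-time heat expansions of the Hodge Laplacians $\Delta_k$ and then to feed in Poincar\'e--Hodge duality and the McKean--Singer identity. The key input is that, with the normalisation of the residue trace for which \eqref{Tres1}--\eqref{Tz2} hold (a priori, this up to a universal sign),
\[
\res(\log\Delta_k)\ =\ 2\bigl(\z_{\Delta_k}(0)+\dim\ker\Delta_k\bigr)\ =\ 2\,c_n(\Delta_k),
\]
where $c_n(\Delta_k)=\int_X a_n(x,\Delta_k)\,dx$ is the $t^0$ coefficient of $\Tr(e^{-t\Delta_k})\sim\sum_{j\ge0}a_j(x,\Delta_k)\,t^{(j-n)/2}$. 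The second equality is Seeley's identity $\z_A(0)+\dim\ker A=c_n(A)$; the first is obtained by extending the noncommutative residue to the log-polyhomogeneous operator $\log\Delta_k=\tfrac{d}{dz}\Delta_k^{z}\big|_{z=0}$ and matching the degree $(-n)$, $\log|\xi|$-free component of its symbol against the parametrix computation of $\z_{\Delta_k}(0)$ (equivalently, reading off the value at $s=0$ of $\TR(\Delta_k^{-s})$). Since $\res$ is local and $E_\rho$ is flat --- so near each point $\Delta_k$ is ${\rm rk}(E_\rho)$ copies of the untwisted Hodge Laplacian $\Delta_k^{(0)}$ on $\Omega^k(X)$ --- one gets $c_n(\Delta_k)={\rm rk}(E_\rho)\,\gamma_k$ with $\gamma_k:=\int_X a_n(x,\Delta_k^{(0)})\,dx$ a Riemannian invariant of $(X,g)$ alone, hence
\[
\log T_X^{\,\res,\,{\bf \b}}(\rho)\ =\ {\rm rk}(E_\rho)\sum_{k=0}^{n}(-1)^{k+1}\b_k\,\gamma_k .
\]

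\emph{Step 2: the two structural facts and the named families.} Two facts about the $\gamma_k$ now do everything. (i) The Hodge star $\star\colon\Omega^k\to\Omega^{n-k}$ is an isometry conjugating $\Delta_k^{(0)}$ to $\Delta_{n-k}^{(0)}$, so $\gamma_k=\gamma_{n-k}$. (ii) McKean--Singer gives $\sum_k(-1)^k\Tr(e^{-t\Delta_k})\equiv\chi(X,E_\rho)={\rm rk}(E_\rho)\chi(X)$ for all $t>0$, so equating $t^0$ coefficients, $\sum_k(-1)^k\gamma_k=\chi(X)$. If $n$ is odd every $\gamma_k$ vanishes (odd heat coefficients are zero on a closed manifold) and $\log T_X^{\,\res,\,{\bf \b}}\equiv0$, so assume $n$ even. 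For $\b={\bf 1}$, $\log T_X^{\,\res,\,{\bf 1}}(\rho)={\rm rk}(E_\rho)\sum_k(-1)^{k+1}\gamma_k={\rm rk}(E_\rho)\chi(X)$, which is \eqref{Tres1} (the overall sign being the one fixed in Step 1). For $\b={\bf k}$, symmetrising $k\mapsto n-k$ in $\sum_k(-1)^{k+1}k\,\gamma_k$ via $\gamma_k=\gamma_{n-k}$ and $(-1)^{n-k+1}=(-1)^{k+1}$ (valid as $n$ is even) turns it into $\tfrac n2\sum_k(-1)^{k+1}\gamma_k=\tfrac n2\chi(X)$, which is \eqref{Tres2}; in particular $\log T_X^{\,\res,\,{\bf k}}(\rho)=\tfrac12\dim(X)\,\log T_X^{\,\res,\,{\bf 1}}(\rho)$.

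\emph{Step 3: the converse, and the main obstacle.} Metric-independence is linear in $\b$, so the admissible families form a subspace of $\R^{n+1}$. Split $\b=\b^{\rm sym}+\b^{\rm anti}$ under $k\leftrightarrow n-k$; by (i) the antisymmetric part contributes $0$ to $\log T_X^{\,\res,\,{\bf \b}}$, so it is always admissible but carries no information, and the content lies in $\b^{\rm sym}$. One then needs the nondegeneracy statement that $g\mapsto\gamma_0(g),\dots,\gamma_{n/2}(g)$ are linearly independent modulo constants except for the single relation $\sum_k(-1)^k\gamma_k=\chi(X)$; granting this, $\sum_k(-1)^{k+1}\b_k^{\rm sym}\gamma_k$ is metric-independent only for $\b^{\rm sym}\in\R\cdot{\bf 1}$. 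Thus the admissible $\b$ are exactly $\R\cdot{\bf 1}$ together with the antisymmetric kernel, the invariant is nontrivial precisely when $\b^{\rm sym}\ne0$, and then $\b$ agrees modulo that kernel with a multiple of ${\bf 1}$ --- equivalently, since ${\bf k}-\tfrac n2{\bf 1}$ is antisymmetric, with a multiple of ${\bf k}$ --- which are the two cases in the statement, and which by Step 2 take the stated values. Proving the nondegeneracy is the main obstacle: the $\gamma_k$ are integrals of the universal heat invariants $a_n(\cdot,\Delta_k^{(0)})$, explicit curvature polynomials, and one must exclude any universal linear relation among them beyond Poincar\'e duality and Gauss--Bonnet--Chern, for instance by evaluating on one-parameter families of metrics on products of spheres and complex projective spaces.

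\emph{Step 4: the $\z$-case.} From $\tfrac{d}{ds}\Delta_k^{-s}=-(\log\Delta_k)\Delta_k^{-s}$ one has $\TR_\z(\log\Delta_k)=-\z'_{\Delta_k}(0)$, so $\log T_X^{\,\z,\,{\bf \b}}(\rho)=-\tfrac12\sum_k(-1)^{k+1}\b_k\,\z'_{\Delta_k}(0)$. Writing $\z_{\Delta_k}(s)=\z_{k-1}^{+}(s)+\z_k^{+}(s)$, where $\z_j^{+}$ is the zeta function of the positive spectrum of $d_j^{*}d_j$ and $\z_{-1}^{+}=\z_n^{+}=0$, the alternating sum telescopes: $\sum_k(-1)^k\z_{\Delta_k}(s)\equiv0$, and differentiating at $s=0$ gives $\log T_X^{\,\z,\,{\bf 1}}(\rho)=0$, which is \eqref{Tz1}. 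For $\b={\bf k}$, $\log T_X^{\,\z,\,{\bf k}}(\rho)=\tfrac12\sum_k(-1)^k k\,\z'_{\Delta_k}(0)$ is the Ray--Singer analytic torsion \eqref{analytic torsion}, so \eqref{Tz2} is the theorem of Cheeger \cite{Ch} and M\"uller \cite{Mu}. The converse in the acyclic case runs as in Step 3, the nondegeneracy input now being the Ray--Singer anomaly formula: in the acyclic case the metric-independent linear combinations of $\{\z'_{\Delta_k}(0)\}$ are spanned by the identically zero $\sum_k(-1)^k\z'_{\Delta_k}(0)$ and the analytic torsion $\sum_k(-1)^k k\,\z'_{\Delta_k}(0)$, which with $\z'_{\Delta_k}(0)=\z'_{\Delta_{n-k}}(0)$ again pins $\b$ down, up to scale and the symmetry-kernel, to ${\bf 1}$ or ${\bf k}$. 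The two genuinely hard points throughout are the residue/zeta identity of Step 1 --- requiring the noncommutative residue for log-polyhomogeneous operators --- and these nondegeneracy statements, asserting that duality and Gauss--Bonnet--Chern, respectively the Ray--Singer anomaly, are the only universal constraints in play.
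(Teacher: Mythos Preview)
Your derivation of the exact formulae \eqref{Tres1}, \eqref{Tres2}, \eqref{Tz1} is essentially the paper's: both use the identity $-\tfrac12\res(\log\Delta_k)=\z_k(0)+\dim\ker\Delta_k$ (your Step 1; equation \eqref{resdetzetazero} in the paper), Poincar\'e duality $\z_k=\z_{n-k}$, and the McKean--Singer/telescoping identity $\sum_k(-1)^k\z_k(s)=0$. The sign you flag is indeed the paper's sign, so your Step 2 is correct once normalised.

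Where you genuinely diverge is the ``only if'' direction. The paper does \emph{not} argue statically via the heat coefficients $c_n(\Delta_k)$; instead it runs a Ray--Singer style variation of the metric, computes $\frac{d}{du}\res(\log\Delta_k)=-\gamma_{k-1}-2\gamma_k-\gamma_{k+1}$ with $\gamma_k:=\res(P_k\delta_kd_k\alpha_k)$, and obtains
\[
\tfrac{d}{du}\log T_X^{\res,\beta}=\sum_{k=1}^{n-1}(-1)^{k+1}(\beta_{k+1}-2\beta_k+\beta_{k-1})\gamma_k,
\]
so that the recurrence $\beta_{k+1}-2\beta_k+\beta_{k-1}=0$ forces $\beta\in{\rm span}\{\mathbf{1},\mathbf{k}\}$ directly. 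The analytic--torsion case is treated by the \emph{same} variational machinery, yielding the same recurrence; this uniformity is what the paper's approach buys. Your approach, by contrast, is more elementary for the residue case (no variation at all) and makes the duality kernel explicit --- you correctly observe that any $\beta$ antisymmetric under $k\leftrightarrow n-k$ gives identically zero residue torsion, hence is trivially admissible, and that $\mathbf{k}-\tfrac n2\mathbf{1}$ lies in this kernel. Note that for $n\ge 4$ this kernel is strictly larger than ${\rm span}\{\mathbf{1},\mathbf{k}\}$, so your conclusion does not literally match the theorem's ``iff''; you resolve this by reading the statement modulo the trivial kernel, which is reasonable but worth flagging explicitly.

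Both arguments leave a nondegeneracy claim unproven, but they are \emph{different} claims. The paper asserts linear independence of the variational quantities $\gamma_k(g)=\res(P_k\delta_kd_k\alpha_k)$ over the space of metrics (citing ``computations in local coordinates using the Bochner formula'' without carrying them out). Your version asks that the heat invariants $c_n(\Delta_0),\dots,c_n(\Delta_{n/2})$ admit no universal metric--independent linear combination beyond the Gauss--Bonnet--Chern one. Neither is obvious; in particular your static nondegeneracy fails as stated for $n=2$, where $c_2(\Delta_0)=\chi/6$ is already metric--independent by Gauss--Bonnet (so \emph{every} $\beta$ is admissible there), and the paper's linear independence must likewise be read modulo the Poincar\'e--duality relations among the $\gamma_k$. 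This is the genuine gap in your Step~3 (and Step~4), and it is also the point the paper handles only by assertion; if you want a complete argument you will need to carry out the explicit evaluations you gesture at, and be precise about the dimension range in which they apply.
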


\vtwo
Thus, any topological residue torsion has the form for some $\la,\mu\in\R$
$$\log T_X^{\,\res, \,\la{\bf 1} +\mu {\bf k}}(\rho) := \frac{1}{2}\sum _{k= 0}^n (-1)^{k+1} (\la + \mu k)
\, {\rm res}(\log \Delta_k).$$
For example, on a closed surface of genus $g$, $\, \frac{1}{2}{\rm res}(\log \Delta_0) - {\rm res}(\log \Delta_1) + \frac{3}{2}  {\rm res}(\log \Delta_2)$ is a topological torsion equal to $(4g-4)\,\rk(E_\rho)$.

That it is only in the cases $\b = {\bf 1}$ or ${\bf k}$ that residue and analytic torsion define smooth invariants is obtained by a variational computation  along the lines of  \cite{RaSi}, whilst the refinement to these being topological invariants is consequent on \eqref{Tres1},  \eqref{Tres2}, \eqref{Tz1}, \eqref{Tz2}. The extension to  non-acyclic cohomology for the relative analytic torsion is straightforward, but for simplicity here we restrict attention to the acyclic case.

Included for completeness, \eqref{Tz2} is the Ray-Singer-Cheeger-Muller theorem \cite{RaSi}, \cite{Ch}, \cite{Mu} - in particular, analytic torsion is zero when $X$ is even dimensional  \cite{RaSi}, 
while in complementary fashion it is when $X$ is odd-dimensional that the residue torsion is zero.  
 If $X$ is even-dimensional $\log T_{X}^{\,\res,  {\bf k}}(\rho)$ equates to the derived Euler characteristic
$\chi{\,\pr} (X, E_\rho) : = \sum_{k=0}^n (-1)^k \,k\,\dim H^k(X,E_\rho),$
which coincides with the right-hand side of \eqref{Tres2}
and occurs in Bismut and Lott's higher analytic torsion formulae \cite{BiLo}. 
This does not quite account for all tracial analytic torsions, insofar as any trace  (proper - not a quasi-trace) on logarithm operators is a linear combination of the residue trace and a leading symbol trace \cite{PaLe}. However, since the leading symbol of  $\Delta_k$ restricts on the Riemannian cosphere bundle $S^*X$ to  the identity operator, the generalised torsion defined by any  leading symbol trace vanishes. 

\thmref{thm one} may be seen as concretising well-known formal similarities that exist between the Euler characteristic and Reidemeister/analytic torsion, in particular  the pasting formulae with respect to partitioning the manifold $X$. Just as for analytic torsion, residue torsion extends readily to manifolds with boundary: 

\begin{thm}\label{thm two}
Let $X$ be a compact manifold with boundary $Y$. Let $B$ be either relative ($\mathcal{R}$) or absolute ($\mathcal{A}$) boundary conditions for the Laplacian on $X$. The   residue torsion $T_{X, B}^{\res, \beta}(\rho)$  is a smooth and topological invariant of $(X,E_\rho)$ if and only if  $\b = {\bf 1}$ or $\b = {\bf k}$.  One has 
\begin{eqnarray}
\log T_{X,B}^{\text{res}, {\bf 1}}(\rho) &=& {\rm rk}(E_\rho)\, \chi_B(X).\label{leading trace torsion}\\
\log T_{X,B}^{\res, {\bf k}}(\rho) &=  & \chi_B'(X,E_\rho) + \sum_{k=0}^n (-1)^k k \zeta_{k,B}(0) \label{leading trace torsion weighted}\\ 
& \stackrel{\mbox{{\rm \tiny {\cite{Vi}}}}}{=} & \frac{1}{2} \dim(X) \, {\rm rk}(E_\rho)\, \chi_B(X),\label{leading trace torsion weighted 2}
\end{eqnarray}
where $R_{X,B}(\rho)$ is Reidemeister torsion and $\chi_B$ and $\chi_B'$ are the ordinary and derived Euler characteristics for  $B$.  

If the cohomology is acyclic then the analytic torsions  $T_{X,\mathcal{A}}^{\,\z, \beta}(\rho)$  and $T_{X,\mathcal{R}}^{\,\z, \beta}(\rho)$ are smooth invariants, and $T_{X,\mathcal{A}}^{\,\z, \beta}(\rho)$  is a topological invariant and $T_{X,\mathcal{R}}^{\,\z, \beta}(\rho)$     is a  PL-invariant, of $(X,E_\rho)$ if and only if  $\b = {\bf 1}$ or $\b = {\bf k}$ and one has 
\begin{eqnarray}
\log T_{X,B}^{\,\z, {\bf 1}}(\rho)   & =&  0\label{antorsionunweighted}. \\
\log T_{X,B}^{\,\z, {\bf k}}(\rho)  &\stackrel{\mbox{{\rm \tiny {\cite{Vi}}}}}{=}&  \log R_{X,B}(\rho)+\frac{1}{2}\chi(Y) \log 2.\label{antorsionvishik}
\end{eqnarray}
\end{thm}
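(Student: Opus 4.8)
The plan is to reduce the boundary case to the closed case of Theorem 1 via the residue trace's locality, and to handle the analytic-torsion statements by invoking Vishik's anomaly formula. First I would set up the residue torsion $T_{X,B}^{\res,\b}(\rho)$ using the Laplacians $\Delta_{k,B}$ with relative or absolute boundary conditions; these are elliptic boundary value problems with discrete spectrum, and their complex powers $\Delta_{k,B}^z$ and logarithms $\log\Delta_{k,B}$ are defined by holomorphic functional calculus exactly as in \eqref{cx power log}. The key structural observation is that $\res(\log\Delta_{k,B})$ is computed by integrating a local density over $X$ built from finitely many homogeneous terms of the interior symbol of $\log\Delta_{k,B}$ — the boundary conditions affect the spectral side but not this symbolic residue density, since $\log\Delta_{k,B}$ has the same interior full symbol as the Laplacian on the double or on a collar extension. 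Hence $\res(\log\Delta_{k,B})$ coincides with the residue computed for the closed-manifold Laplacian symbol on $X$, and the same variational argument as in Theorem 1 (differentiating along a path of metrics, using that $\res[\,\log\Delta_k, \cdot\,]$-type terms and the coboundary-squaring structure $\Delta_k=\d_{k-1}\d_{k-1}^*+\d_k^*\d_k$ produce cancellations in the alternating sum precisely when $\b_k=\la+\mu k$) shows $T_{X,B}^{\res,\b}(\rho)$ is a smooth invariant iff $\b={\bf 1}$ or $\b={\bf k}$ up to constants.

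Next I would prove the explicit identifications \eqref{leading trace torsion} and \eqref{leading trace torsion weighted}. For $\b={\bf 1}$: by the argument above $\res(\log\Delta_{k,B})$ equals the closed-case value, whose alternating sum telescopes — using $\sum_k(-1)^k\res(\log\Delta_k)$ and the Hodge-decomposition pairing of $d$-eigenvalues across adjacent degrees — to the local index density integrating to $\rk(E_\rho)\,\chi_B(X)$; here $\chi_B(X)=\chi(X)$ in the absolute case and $\chi(X)-\chi(Y)$-type corrections in the relative case are absorbed into the definition of $\chi_B$. For $\b={\bf k}$: I would show $\sum_k(-1)^k k\,\res(\log\Delta_{k,B})$ splits as the derived Euler characteristic $\chi_B'(X,E_\rho)$ plus the zeta-correction $\sum_k(-1)^k k\,\z_{k,B}(0)$, the latter being the standard heat-kernel-coefficient term $a_{n}(\Delta_{k,B})$ with its boundary contributions; invoking Vishik \cite{Vi} identifies the total with $\tfrac12\dim(X)\,\rk(E_\rho)\,\chi_B(X)$, giving \eqref{leading trace torsion weighted 2}.

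For the analytic-torsion statements \eqref{antorsionunweighted} and \eqref{antorsionvishik}, assuming acyclicity, I would first note the smoothness/topological(-or-PL)-invariance dichotomy is Vishik's theorem: the $\TR_\z$-version of the variational computation shows invariance forces $\b\in\{{\bf 1},{\bf k}\}$, with the relative case only PL-invariant because of the boundary anomaly. Then \eqref{antorsionunweighted} follows because $\sum_k(-1)^k\TR_\z(\log\Delta_{k,B})$ is, up to sign, $\log$ of the product of zeta-regularized determinants $\prod_k (\zdet\Delta_{k,B})^{(-1)^k}$, which by the Hodge-theoretic pairing of nonzero spectra between consecutive degrees collapses to $0$ (this is the classical statement that the alternating product of Laplacian determinants is trivial, the "McKean–Singer for determinants"). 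Finally \eqref{antorsionvishik} is exactly Vishik's refinement of Ray–Singer–Cheeger–Müller to manifolds with boundary, with the $\tfrac12\chi(Y)\log 2$ defect; I would cite \cite{Vi} for this.

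The main obstacle I anticipate is the first step: rigorously justifying that $\res(\log\Delta_{k,B})$ is insensitive to the boundary condition and equals a purely interior symbolic integral. This requires care because $\log\Delta_{k,B}$ is not a classical pseudodifferential operator on $X$ in the naive sense — its Schwartz kernel has boundary layer contributions — so one must either work with the Boutet de Monvel calculus (where the residue trace extends and the interior symbol term is the relevant one) or argue via a parametrix on a closed double $\widetilde X$ and show the difference is residue-trace-negligible. Getting the $\chi_B$ bookkeeping (relative vs.\ absolute, and the role of $\chi(Y)$) exactly right in this reduction is the delicate technical point.
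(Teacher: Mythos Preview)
Your proposal rests on a premise that is false: that $\res(\log\Delta_{k,B})$ is computed purely from the interior symbol and is insensitive to the boundary condition $B$. In the Boutet de Monvel framework the residue trace of Fedosov--Golse--Leichtnam--Schrohe has both an interior contribution \emph{and} a boundary contribution coming from the singular Green part $G^{\log}$ of $\log\Delta_{k,B}=(\log\widetilde\Delta_k)_+ + G^{\log}$. Concretely, the paper uses Grubb's identity
\[
-\tfrac12\,\res(\log\Delta_{k,B}) \;=\; \zeta_{k,B}(0) + \dim\ker\Delta_{k,B},
\]
and then gives explicit examples (the interval $[0,R]$ and the cylinder $[0,R]\times S^1$) in which $\zeta_{k,\mathcal R}(0)\neq\zeta_{k,\mathcal A}(0)$; hence $\res(\log\Delta_{k,B})$ genuinely depends on $B$. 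So your reduction ``boundary case $\to$ closed case via locality'' cannot work as stated, and the variational argument from Theorem~1 does not carry over by symbol comparison alone.

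The paper's route is essentially spectral rather than symbolic. It passes through the Grubb identity above to convert the residue torsion into $\sum_k(-1)^k\beta_k\,\zeta_{k,B}(0)$ plus kernel dimensions, and then varies the metric using the heat-trace formulae of Ray--Singer (their Theorems~6.1 and~7.3) together with Vishik's differentiation-under-the-integral argument; this reproduces the recurrence $\beta_{k+1}-2\beta_k+\beta_{k-1}=0$ without ever claiming the residue is purely interior. For the exact values it does \emph{not} telescope a local index density; instead it proves the spectral duality $\zeta_{k,\mathcal R}(s)=\zeta_{n-k,\mathcal A}(s)$ and a companion identity $\sum_k(-1)^k k\,\zeta_{k,\mathcal R}(s)=(-1)^{n-1}\sum_k(-1)^k k\,\zeta_{k,\mathcal A}(s)$, from which $\sum_k(-1)^k\zeta_{k,B}(s)=0$ follows, and then combines this with the Hodge isomorphisms $\ker\Delta_{k,B}\cong H^k_B(X,E_\rho)$ and Vishik's Proposition~2.23 for the weighted sum. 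Your sketch of \eqref{antorsionunweighted} and \eqref{antorsionvishik} is fine in spirit, but the residue-torsion half needs to be rebuilt along these spectral lines.
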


Included again for completeness,  \eqref{antorsionvishik} is  Vishik's extension \cite{Vi} of the Ray-Singer-Cheeger-Muller theorem to manifolds with boundary. 

An exact pasting formula  for the residue torsion of a partitioned compact manifold with boundary into codimension zero submanifolds with boundary is given in $\S 2$.

{\it Acknowledgements}: The idea to look into residue torsion was originally suggested to the second author by Krzysztof Wojciechowski.
 
\section{Proof of \thmref{thm one}} 

For the Laplacian $\Delta_k:  \O^k(X ,E_\rho)\to  \O^k(X ,E_\rho)$ the 
 holomorphic functional calculus constructs the complex powers $\Delta_k^z,\,z\in\C$ \cite{Se}, and hence the pseudodifferential logarithm operator \cite{Gr, KoVi, Ok} 
\begin{equation}\label{log A}
\log\Delta_k:= \l.\partial_z(\Delta_k^z)\r|_{z=0} = \lim_{s\to 0} \frac{i}{2\pi}\int_\Cc \la\si \,\log \la \,(\Delta_k - \la)\ii \ d\la
\end{equation}
where $\la\si$ and $\log\la$ are the principal values, over a contour $\Cc$ in $\C\backslash \ \ol{\R}_-$ going around positively the non-zero  eigenvalues of $\Delta_k$. The operator \eqref{log A} has local symbol asymptotics 
$(\log \s)(x,\xi) \sim \sum_{j\geq 0} (\log \s)_{-j}(x,\xi)$ with 
\begin{equation}\label{log symbol -j}
(\log \s)_{-j}(x,\xi) := \frac{i}{2\pi}\int_{\Cc_0}  \log\la\ \rs_{-2- j}(x, \xi, \la)\ d\la
 \end{equation} 
where $\rs(x, \xi, \la)\sim \sum_{j\geq 0} \rs_{-m- j}(x, \xi, \la)$  is the parameter quasi-polyhomogeneous expansion of the symbol of the resolvent $(\D_k - \la)\ii$ for $\la$ in a closed contour  $\Cc_0$ enclosing the origin. It follows that  $(\log \s)_{-j}(x,\xi)$ is homogeneous of degree $-j$ in $|\xi|\geq 1$ for $j\geq 1$, and has leading symbol
$(\log \s)_0(x,\xi)  = 2\,\log |\xi|  +   \log \l(\frac{|\xi|_{g(x)}}{|\xi|}\r)$. 
The second summand of $(\log \s)(x,\xi)$ is homogeneous of degree zero, and hence  $\log \D_k$ is a `logarithmic $\pdo$', in the sense of \cite{Ok} with a well-defined   residue trace character 
\begin{equation}\label{resdetA}
\res(\log \D_k)  =
\frac{1}{(2\pi)^n}\int_M\int_{|\eta|=1}\tr\left((\log \s)_{-n}(x,\eta)\right)
\ d_{\mbox{{\tiny S}}}\eta\, dx.
\end{equation}
If $n=\dim X$ is odd each of these numbers is identically zero, and hence so is $\log T_X^{\,\res, {\bf \b}}(\rho)$ - for any $\b\in\R^{n+1}$. To see this, since $(\D_p - \la)\ii$ has scalar-valued leading symbol  $\rs_{-2}(x, \xi, \la) = (|\xi|^2_{g(x)} - \la)\ii$ one has the resolvent symbol formula over $U\<X$ 
$$\rs_{-2-n}(x, \xi, \la) = \sum_{\frac{n}{2}\leq k \leq 2n} \rs_{-2}(x, \xi, \la)^{k+1} \rs_{-2-n,k}(x,\xi)$$
in which $\rs_{-2-n,k} \in C^\oo (U\x\R^n, \End \R^N)$ is a finite product of derivatives (in $x$ and $\xi$) of the symbol components of $\D_p$ in the local trivialization over $U$, and therefore polynomial in $\xi$ and independent of $\la$. If $n$ is an odd integer it is an odd function of  $\xi$ of degree $2k-n$, i.e. $\rs_{-2-n,k}(x,-\xi) = (-1)^{2k-n}\rs_{-2-n,k}(x,\xi) = -\rs_{-2-n,k}(x,\xi)$, and since 
\begin{equation*}\label{log symbol -n res}
\tr_x\l((\log \s(\D_p))_{-n}(x,\xi)\r) := \frac{i}{2\pi}\int_{\Cc_0}  \log \la\ \tr_x(\rs_{-2-n}(x, \xi, \la))\ d\la
 \end{equation*} 
then $\tr_x\l((\log \s(\D_p))_{-n}(x,-\xi)\r) = - \tr_x\l((\log \s(\D_p))_{-n}(x,\xi)\r) $ and so
\begin{equation}\label{odd log res}
\int_{S^{n-1}} \tr_x\l((\log \s(\D_p))_{-n}(x,\eta)\r)  d_{\mbox{{\tiny S}}}\eta= 0.
\end{equation} 
Thus, $\log \D_p$ is `odd class' (`even-even') \cite{KoVi, Ok, Gr} and $\res(\log \D_p)=0$.

To determine for which ${\bf \b}\in\R^{n+1}$ smooth invariance of the torsions holds, consider a smooth path of metrics $u \in \mathbb{R} \mapsto g^X(u)$, defining smooth paths of Hodge operators $* = *_u$  and Laplacians $\Delta_k= \Delta_k(u)$ on $k$ forms.  Consider, first, residue torsion. In view of the ellipticity of the Laplacian, 
\begin{align*}
\frac{d}{du} \res (\log \Delta_k) =   \text{res} \left( \dot{\Delta}_k^{}P_k  \right)= \text{res} \left(P_k \dot{\Delta}_k^{} \right)
\end{align*}
for any parametrix $P_k$ for $\Delta_k$; that is,  $P_k \in  \Psi^{-2}(X, \Lambda T^k X \otimes E_\rho)$ with  $P_k \Delta_k - I$ and $I - \Delta_k P_k$  in $\Psi^{-\infty}:= \Psi^{-\infty}(X, \Lambda(X) \otimes E_\rho)$
(smoothing). We may  choose $P_k:=(\Delta_k + \Pi_k)^{-1}$ 
with $\Pi_k$ the orthogonal projection onto $\ker \Delta_k\cong H^k(X)$. By ellipticity, $\Pi_k$ is  finite rank and independent of the metric, $\Delta_k +\Pi_k$ is a smooth family of invertible operators with $\frac{d}{du}(\Delta_k+ \Pi_k)= \dot{\Delta}_k$ and $
I=P_k(\Delta_k^{} + \Pi_k)=P_k \Delta_k^{} + P_k\Pi_k$ with $P_k\Pi_k\in \Psi^{-\infty}.$
 
Since $*^{-1}_k=(-1)^{k}*^{}_{n-k}$, then $\alpha_k:=*_k^{-1}\ \dot{*}_k^{}= -\ \dot{*}_{n-k}^{}\ *_{n-k}^{-1}: \Lambda^k(X, E_\rho) \to \Lambda^k(X, E_\rho)$, and  similarly to Theorem 2.1 of \cite{RaSi} with $\delta_k:=d_k^*$
\begin{eqnarray*}\label{derivativeoflaplaciano}
\dot{\Delta}_k&  = & \frac{d}{du}(\delta_k d_k + d_{k-1} \delta_{k-1})   =   - \dot{*} \ d * d - *\  d\  \dot{*}\  d -  d\ \dot{*}\ d *-  d * d\ \dot{*}  \\
& =   & + \alpha *\ d * d - *\  d\ *\ \alpha\  d +  d\ \alpha *\ d *-  d * d\ *\ \alpha  \\ 
& =   & -\alpha_{k}\delta_{k} d_{k} + \delta_{k} \alpha_{k+1}  d_{k} -  d_{k-1} \alpha_{k-1} \delta_{k-1} + d_{k-1} \delta_{k-1} \alpha_{k}.
\end{eqnarray*}
Hence $\text{res}(P_k\dot{\Delta}_k^{}) $ is equal to 
\begin{eqnarray*}
   \underbrace{- \text{res}(P_k\alpha_{k}^{}\delta_{k}^{} d_{k}^{})}_{(i)}  \ + \ \underbrace{\text{res}(P_k\delta_{k}^{} \alpha_{k+1}^{}  d_{k}^{})}_{(ii)} 
\ - \ \underbrace{ \text{res}(P_kd_{k-1}^{} \alpha_{k-1}^{} \delta_{k-1}^{})}_{(iii)} \ + \ \underbrace{\text{res}(P_kd_{k-1}^{} \delta_{k-1}^{} \alpha_{k}^{})}_{(iv)}.
\end{eqnarray*} 
The following identities hold, the first two exactly, and the second two modulo $\Psi^{-\oo}$
\begin{equation}\label{identitiesforlaplacian}
d_k \Delta_k^{} = \Delta_{k+1} d_k  \ \ \    \delta_{k-1} \Delta_{k}^{}=\Delta_{k-1}^{} \delta_{k-1}, \ \ \ 
d_k P_k= P_{k+1} d_k  \ \ \    \delta_{k-1} P_k =P_{k-1} \delta_{k-1}.
\end{equation}
For, since  $\Delta_k^{}P_k - I  \in \Psi^{-\infty}$ then the difference of $d_k \Delta_k^{}P_k - d_k$ and $\Delta_{k+1}^{} P_{k+1}d_k - d_k$ is smoothing, and hence that  $\Delta_{k+1}^{} (d_kP_k - P_{k+1}d_k) \in \Psi^{-\infty}$ implying the third equality of \eqref{identitiesforlaplacian} by ellipticity, and likewise  $\delta_{k-1} P_k-P_{k-1} \delta_{k-1} \in \Psi^{-\infty}$. Hence
$$
{(i)}  = - \text{res}(P_k\alpha_{k}^{}\delta_{k}^{} d_{k}^{})  =  - \text{res}(\delta_{k}^{} d_{k}^{} P_k\alpha_{k}^{}) = - \text{res}(\delta_{k}^{} P_{k+1} d_{k}^{} \alpha_{k}^{}) 
 = - \text{res}(P_k \delta_{k}^{} d_{k}^{} \alpha_{k}^{}),$$
$$ {(iii)}  =-  \text{res}(P_kd_{k-1}^{} \alpha_{k-1}^{} \delta_{k-1}^{})= -  \text{res}(\delta_{k-1}^{} P_kd_{k-1}^{} \alpha_{k-1}^{} )
 =  -  \text{res}(P_{k-1}\delta_{k-1}^{} d_{k-1}^{} \alpha_{k-1}^{}).$$
On the other hand, since $\alpha_{k}^{} - P_k\delta_{k}^{} d_{k}^{} \alpha_{k}^{} - P_k d_{k-1}^{} \delta_{k-1}^{} \alpha_{k}^{}= \alpha_{k}^{} -P_k\Delta_k^{} \alpha_{k}^{} \in \Psi^{-\infty}$, then 
\begin{align*}
(ii) & =\text{res}(P_k\delta_{k} \alpha_{k+1}  d_{k}) =\text{res}(d_{k}P_k\delta_{k} \alpha_{k+1})=\text{res}(P_{k+1}d_{k}\delta_{k} \alpha_{k+1}) \\
&= \text{res}(\alpha_{k+1}^{}) - \text{res}(P_{k+1}\delta_{k+1}^{} d_{k+1}^{}\alpha_{k+1}^{}), \\
(iv) &=\text{res}(P_kd_{k-1}^{} \delta_{k-1}^{} \alpha_{k}^{})= \text{res}(\alpha_k) - \text{res}(P_k\delta_{k}^{} d_{k}^{}\alpha_{k}^{}).
\end{align*}
Setting $$\gamma_k^{} := \begin{cases}
\text{res}(P_k\delta_{k}^{} d_{k}^{}\alpha_{k}^{}) & k\in\{0,1,\ldots,n\},\\
0  & k\in\Z\bsh\{0,1,\ldots,n\},
\end{cases}$$
 we have
${(i)}  = - \gamma_{k}^{},  (ii)  = \text{res}(\alpha_{k+1}^{}) - \gamma_{k+1}^{},  {(iii)}  =- \gamma_{k-1}^{},$  $(iv)= \text{res}(\alpha_k) - \gamma_k^{},$ and
 $$\text{res}(P_k\dot{\Delta}_k^{}) = \text{res}(\alpha_k) + \text{res}(\alpha_{k+1}^{}) - \gamma_{k+1}^{}- 2\gamma_k^{} - \gamma_{k-1}^{}  = - \gamma_{k+1}^{}- 2\gamma_k^{} - \gamma_{k-1}^{},$$ as $\res(\alpha_k)=0$ since $\alpha_k \in \text{End}(\Lambda^k(M, E_\rho))$ is an order zero bundle endomorphism. Hence
$$
2\frac{d}{du} \log T_X^{\text{res},\beta}(\rho, g_u)  
 =   -2 \sum_{k=0}^{n} (-1)^{k+1} \,\beta_k \gamma_k^{} - \sum_{k=0}^{n} (-1)^{k+1} \, \beta_k \gamma_{k+1}^{} -   \sum_{k=0}^{n} (-1)^{k+1} \, \beta_k \gamma_{k-1}^{}$$ 
\begin{equation}\label{calculationsaturday}
= \ \  \sum_{k=1}^{n-1} (-1)^{k+1} \ (\beta_{k+1}-2\beta_k+\beta_{k-1})  \gamma_k^{} \ + \ (\beta_1-2\beta_0)  \gamma_0 \ + \ (\beta_{n-1}-2\beta_n) \gamma_n.  
\end{equation}
The $k=0, k=n$ summands need to be treated separately as they involve $\g_{-1}:=0, \g_{n+1}:=0$. However, both summands vanish: $\gamma_0 = \text{res}(P_0\delta_0^{} d_0^{}\alpha_0^{}) = \text{res}(\alpha_0^{})=0$ and likewise for $\g_n$. We are thus left with just the summation term. For this, the $\g_k = \g_k(g_u)$ are linearly independent as elements in the space of smooth functions on the Frechét manifold of Riemannian metrics on $X$; that is, constants $\la_k$ with $$\la_1\g_1(g_u) + \cdots + \la_m\g_m(g_u) =0$$ for any metric $g = g_u$ necessarily vanish, $\la_k=0$ for all $k$, since  the numbers 
$$\g_k(g_u) = \text{res}(\Delta_k\ii d_k^*d_k \alpha_k) $$
depend explicitly on the choice of Riemannian metric, and there is no simple linear relation between them. This is seen by computations in local coordinates using the Bochner formula 
$$\Delta_k = \sum_{ij} g^{ij}\nabla_{e_i}\nabla_{e_j} - \sum_{ijl} g^{ij}\Gamma^l_{ij}\nabla_{e_l}  + D^k R, \ \ \ \ \ \  d^*_k = - *_g \ d_k *_g, $$
for an orthonormal frame $\{e_j\}$  with $\nabla_{e_j}$ the metric connection, $R$ the Riemann curvature tensor and $D^k R$ the induced exterior algebra $k$-form derivation.  On the other hand, from \eqref{calculationsaturday} smooth invariance of the residue torsion $\frac{d}{du} \log T_X^{\text{res},\beta}(\rho, g_u)  =0$  for any path of metrics $g_u$ has been reduced to 
$$\sum_{k=1}^{n-1} (-1)^{k+1} \ (\beta_{k+1}-2\beta_k+\beta_{k-1})  \gamma_k (g) =0$$
for constants $\b_j$. This can therefold hold (on arbitrary $X$) if and only if 
$$\beta_{k+1}-2\beta_k+\beta_{k-1}=0$$
for each non-negative integer $k$. Elementary methods give the solution of this recurrence relation to be a linear combination of the two independent solutions $\beta_k=1$ for all $k$, or $\beta_k=k$ for all $k$. That is, residue torsion is a smooth invariant of the Riemannian metric if and only if  $\b = \la {\bf 1} + \mu {\bf k}$ for arbitrary constants $\la,\mu$. 

As a simple example, if $X$ is a (real) closed surface then the residue torsion variation formula is
$$\frac{d}{du} \log T_X^{\text{res},\beta}(\rho, g_u)   =   - (\beta_2-2\beta_1+\beta_0) \, \text{res}(P_1\delta_1 d_1\alpha_1)$$
for $\b = (\b_0,\b_1,\b_2)\in\R^3$. Since $\text{res}(P_1\delta_1 d_1\alpha_1) $ is generically non-zero on the space $\Mm(X)$ of metrics on $X$, then the variation is zero on $\Mm(X)$ precisely for $\b$ in the plane $\beta_2-2\beta_1+\beta_0=0$ in $\R^3$,  and this plane is spanned by the basis vectors ${\bf 1} = (1,1,1)$ and ${\bf k} = (0,1,2)$.

The case of analytic torsion is similar (again guided by Theorem 2.1 of \cite{RaSi}). Assuming the cohomology with coefficients in $E_\rho$ is trivial and $X$ is odd-dimensional, then
\begin{align*}
f(u,s):=\frac{1}{2}\sum_{k=0}^n(-1)^k \beta_k \int_0^\infty t^{s-1} \text{Tr}\left(e^{-t \Delta_k}\right) dt
\end{align*}
is well-defined for $\Re(s) \gg 0$ and extends meromorphically to $\C$  with no pole at $s=0$ with $$f(u,0)= \log T_X^{\,\zeta,\beta} (\rho).$$ For smooth invariance we wish to examine when $\frac{\partial}{\partial u}f(u,0)=0$. Since we have the uniform estimate
$$ \text{Tr}\left(e^{-t \Delta_k}\right) \leq Ce^{-\epsilon t}, \ \ \ \ \ \ \ t\geq t_0>0,$$
for $C, \epsilon >0$, independently of $u$, then we can differentiate under the integral to get for $\Re(s)$ large
$$
\frac{\partial}{\partial u}f(u,s)=\frac{1}{2}\sum_{k=0}^{n}(-1)^{k+1} \beta_k \int_0^\infty t^{s}\, \text{Tr}\left(e^{-t \Delta_k}\dot{\Delta}_k\right) dt.
$$
Setting $\varphi_k:=\text{Tr}\left(e^{-t \Delta_{k}}d\delta\alpha   \right)$ and $\theta_k:=\text{Tr}\left(e^{-t \Delta_{k}}\delta d\alpha  \right)$,  we can rewrite this as
\begin{align}\label{var fu}
\frac{\partial}{\partial u}f(u,s)= \int_0^\infty t^{s} \sum_{k=0}^n(-1)^{k+1} \beta_k\left(\varphi_{k+1} - \theta_k + \varphi_k - \theta_{k-1}\right) dt.
\end{align}
Noting $\varphi_0=\theta_n=0$, 
\begin{align}
&\sum_{k=0}^n(-1)^{k+1} \beta_k\left(\varphi_{k+1} - \theta_k + \varphi_k - \theta_{k-1}\right) \nonumber \\= &\sum_{k=1}^n(-1)^{k+1}\left[(\beta_k - \beta_{k-1})\varphi_k + (\beta_{k+1}-\beta_k) \theta_k\right]+ (\beta_0 - \beta_1)\theta_0  + (-1)^n(\beta_{n-1} - \beta_n)\varphi_n\nonumber\\
=&\sum_{k=1}^{n}(-1)^{k+1}(2\beta_k - \beta_{k-1}- \beta_{k+1})\varphi_k \ \ + \ \  \sum_{k=0}^{n}(-1)^{k+1}(\beta_{k+1}-\beta_k)  \text{Tr}\left(e^{-t \Delta_{k}}\Delta_k\alpha   \right)\nonumber\\
& = \ \ \sum_{k=1}^{n-1}(-1)^{k+1}(2\beta_k - \beta_{k-1}- \beta_{k+1})\varphi_k \ \  +\ \  (2\beta_n -\beta_{n-1})\varphi_n \label{var1}\\
 & \hskip 7mm + \sum_{k=0}^{n-1}(-1)^{k}(\beta_{k+1}-\beta_k)  \frac{d}{dt}\text{Tr}\left(e^{-t \Delta_{k}}\alpha   \right) \ \ + \ \ \beta_n \frac{d}{dt}\text{Tr}\left(e^{-t \Delta_n}\alpha   \right)  \label{var2}
\end{align}
with the $k=n$ term of the sums separated off corresponding to the vanishing $\varphi_{n+1}=0$. 
Each of the summands in \eqref{var2} contributes to \eqref{var fu}, via integration by parts, a factor
$ s\int_0^\infty t^{s-1} \text{Tr}\left(e^{-t \Delta_{k}}\alpha   \right) dt$ -- but this vanishes at $s=0$, since, exactly as in  Thm 2.1 of \cite{RaSi},
the integral is holomorphic at 0 in view of $\res(\alpha)=0$. Thus it is only the terms in  \eqref{var1}  which may contribute to  
$\partial_u f(u,s)$. First, by the same argument, since $\varphi_n =\frac{d}{dt}\text{Tr}\left(e^{-t \Delta_{n}}\alpha\right)$ its contribution to the variation of $f(u,s)$ is zero. So we are left with
\begin{align*} 
\frac{\partial}{\partial u}f(u,s)&=\frac{1}{2}\sum_{k=1}^{n}(-1)^{k}(\beta_{k+1} - 2\beta_k + \beta_{k-1})  \int_0^\infty t^{s} \varphi_k \,dt \\
&= \frac{1}{2}\sum_{k=1}^{n}(-1)^{k}(\beta_{k+1} - 2\beta_k + \beta_{k-1})  \Gamma(s+1) \zeta(\Delta^{-1}_kd \delta \alpha_k, \Delta_k, s).
\end{align*}
The $s$ dependent terms are  holomorphic at $s=0$, since $\res(\Delta^{-1}_kd \delta \alpha_k)=0$ as $\Delta^{-1}_kd \delta \alpha_k$ is odd class, giving on setting $s=0$  the analytic torsion  variation formula 
$$
\frac{\partial}{\partial u}\log T_X^{\,\zeta,\beta} (\rho) = \frac{1}{2}  \sum_{k=1}^{n}(-1)^{k}(\beta_{k+1} - 2\beta_k + \beta_{k-1})\zeta(\Delta^{-1}_kd \delta \alpha_k, \Delta_k, 0).
$$
As for residue torsion, no non-zero linear relation exists between the metric dependent terms $\zeta(\Delta^{-1}_kd \delta \alpha_k, \Delta_k, 0)$. The variation is hence zero precisely when $\beta_{k+1}-2\beta_k+\beta_{k-1}=0$ for each $k$. $T_X^{\,\zeta,\beta} (\rho)$ is hence a smooth invariant only when $\b =   {\bf 1}$ or ${\bf k}$, the latter case being \cite{RaSi}.

For both residue and analytic torsion the above computations can be repeated for a variation of the metric on $E_\rho$, but the details are similar. 

\vtwo
 
To prove the exact formulae \eqref{Tz1}, \eqref{Tres1} and \eqref{Tres2}, let $ \zeta_{k, \rho}(s)  = \Tr (\D_k^s)$ for the Laplacian $\D_k  :  \O^k(X ,E_\rho)\to  \O^k(X ,E_\rho)$. 
For each $k=0, \dots, n$,  $\Delta_k$ and $\Delta_{n-k}$ are isospectral, since $*_k\Delta_k=\Delta_{n-k}*_k$. Therefore we obtain the Poincar\'e Duality property
\begin{align}\label{zetasymmetric}	
\zeta_{k, \rho}(s) =\zeta_{n-k, \rho}(s)
\end{align}
$\forall s\in \mathbb{C}$ by uniqueness of continuation.
If $n$ is even  $(-1)^{n-k} = (-1)^k$ and 
\begin{align*}
\sum_{k=0}^n (-1)^{k} k \zeta_{k, \rho}(s) \stackrel{(\ref{zetasymmetric})}{=}  &\sum_{k=0}^n (-1)^{k} k \zeta_{n-k, \rho}(s) = \sum_{k=0}^n (-1)^{n-k} (n-k) \zeta_{k, \rho}(s) \\
 = &\sum_{k=0}^n (-1)^{k} (n-k) \zeta_{k, \rho}(s) = n\sum_{k=0}^n (-1)^k  \zeta_{k, \rho}(s) - \sum_{k=0}^n (-1)^{k}k \zeta_{k, \rho}(s).
\end{align*}
Hence, for even $n$ 
 \begin{equation}\label{i}
 \frac{n}{2}\sum_{k=0}^n (-1)^{k} \ \zeta_{k, \rho}(s) = \sum_{k=0}^n (-1)^{k} k\ \zeta_{k, \rho}(s),
 \end{equation}
 while  if $n$ is odd, then $(-1)^{n -k}= - (-1)^k$ and
\begin{align*}
\sum_{k=0}^n (-1)^{k} \ \zeta_{k, \rho}(s)  \stackrel{(\ref{zetasymmetric})}{=}\sum_{k=0}^n (-1)^{k} \ \zeta_{n-k, \rho}(s) = \sum_{k=0}^n (-1)^{n-k} \ \zeta_{k, \rho}(s)= - \sum_{k=0}^n (-1)^{k} \ \zeta_{k, \rho}(s).
\end{align*}
So that
 \begin{equation}\label{ii}
\sum_{k=0}^n (-1)^{k} \ \zeta_{k, \rho}(s) = 0  \ \ \  \ffor \ n =\dim X \ {\rm odd}.
 \end{equation}
From  \cite{RaSi},  Theorem 2.3, we recall that  if $n$ is even
\begin{equation}\label{zeta vanishing k}
\sum_{k=0}^n (-1)^{k} k\ \zeta_{k, \rho}(s) = 0,
\end{equation}
and hence from \eqref{i}
\begin{equation}\label{zeta vanishing 1}
\sum_{k=0}^n (-1)^{k} \ \zeta_{k, \rho}(s) =0   \ \ \  \ffor \ n =\dim X \ {\rm even}.
\end{equation}
\eqref{ii} and \eqref{zeta vanishing 1} may together equivalently be seen by identifying $\sum_{k=0}^n (-1)^{k} \ \zeta_{k, \rho}(s)$ with the difference of spectral zeta functions $\zeta(D^*D,s) - \zeta(D^*D,s)$ with $D:= \bigoplus_k d_{2k} + d_{2k}^*$ acting on  forms of even degree, with range in forms of odd degree, noting that  $D^*D$ and $DD^*$ have identical non-zero eigenvalue spectrum.   Either way, this gives
\begin{equation*}
\log T_X^{\,\z, {\bf 1}}(\rho)   =  0 \ \ \ \forall \ n =\dim X.
\end{equation*}
Consider $$\chi(X, E_\rho):=\sum_{k=0}^n (-1)^k \,\dim H^k(X,E_\rho)\ \ \ \aand \ \ \ 
\chi'(X, E_\rho):=\sum_{k=0}^n (-1)^k k \,\dim H^k(X,E_\rho).$$
Set $b_k:= \dim H^k(X,E_\rho)$. By Poincar\'e duality
\begin{align*}
\chi'(X, E_\rho)&=\sum_{k=0}^{n}(-1)^k k b_k=\sum_{k=0}^{n}(-1)^k k b_{n-k}=\sum_{k=0}^{n}(-1)^{n-k} (n-k) b_k\\
&=(-1)^n n\sum_{k=0}^{n}(-1)^k b_k + (-1)^{n-1} \sum_{k=0}^{n}(-1)^k kb_k\\
&= (-1)^{n-1} \chi'(X, E_\rho) + (-1)^n n \chi(X, E_\rho).
\end{align*}
Hence
\begin{equation}\label{weighedcasenotveryinteresting}
\chi'(X, E_\rho)(1 + (-1)^n)=n\chi(X, E_\rho).
\end{equation}
So if $n$ is even
\begin{equation}\label{corollarinoinoino}
\chi'(X, E_\rho)=\frac{n}{2}\chi(X, E_\rho).
\end{equation}
$\chi'(X, E_\rho)$ thus does not  provide new information if $\dim X$ is even, but it may when $\dim X$ is odd --- in general,  the $j^{{\rm th}}$ derived Euler characteristic $\chi_j(X, E_\rho)$ is the first nontrivial homotopy invariant when $\chi_k(X, E_\rho)$ vanishes for each $k <j$ \cite{Rm}.

The residue determinant is related to the  spectral zeta function $\z_{k, \rho}(z) := \l.\Tr\l(\D_k^z\r)\r|^{{\rm
mer}} $  by   \cite{Sc2}
\begin{equation}\label{resdetzetazero}
-\frac{1}{2}{\rm res}(\log \D_k)  =  \z_{k, \rho}(0) \, +  \dim\ker (\D_k).
\end{equation}
Let $n=\dim X$ be even. Then
\begin{eqnarray*}
\log T_X^{\res, {\bf 1}}(\rho) &\stackrel{\eqref{resdetzetazero}}{=} & \sum_{k=0}^{n} (-1)^k \zeta_{k, \rho}(0) + \sum_{k=0}^{n} (-1)^k \dim \ker(\Delta_k^{}) \\ &\stackrel{\eqref{zeta vanishing 1} }{=} &\sum_{k=0}^{n} (-1)^k \dim \ker(\Delta_k^{})  \\[1mm]&= & \chi(X,E_\rho)  \ =  \ \rk(E_\rho)\,\chi(X)
\end{eqnarray*}
using   the Hodge theorem for the third equality, and the index theorem for the fourth. Similarly,  if $\beta= {\bf k}$
\begin{eqnarray*}
\log T_X^{\res, {\bf k}}(\rho) &=& \sum_{k=0}^{n} (-1)^k k\zeta_{k, \rho}(0) + \sum_{k=0}^{n} (-1)^k k\dim \ker(\Delta_k^{}) \\[1mm]&\stackrel{\eqref{zeta vanishing k}}{=}
 & \chi^\prime (X,E_\rho)  \ \stackrel{\eqref{corollarinoinoino}
}{=} \ \frac{n}{2}\,\rk(E_\rho)\,\chi(X).
\end{eqnarray*}

In summary: \eqref{Tz1}, \eqref{Tz2} (by \cite{RaSi},\cite{Ch},\cite{Mu}), \eqref{Tres1},  \eqref{Tres2} hold and are topological invariants (homotopy invariants for the latter two) whilst, conversely, if $T_X^{\res, \beta}(\rho)$ and $T_X^{\zeta, \beta}(\rho)$ are topological invariants then they are smooth invariants and this implies $\beta= {\bf 1}$ or ${\bf k}$. This completes the proof of \thmref{thm one}.

\section{Proof of \thmref{thm two}}

When $X$ is an $n$-dimensional smooth manifold with non-empty  boundary $Y$, we assume it embedded into a closed $n$-dimensional manifold $\widetilde{X}$ and with a product structure on a collar neighbourhood $U\cong [0, c) \times Y$. Thus, we have an orthogonal decomposition of smooth $k$-forms 
$\omega_{|U}=\omega_1 + dt  \wedge\omega_2$, 
where $\omega_1 \in C^\infty([0,c)) \otimes \Omega^k(Y, {E_\rho}_{|Y})$ and $\omega_2 \in C^\infty([0,c))  \otimes\Omega^{k-1}(Y, {E_\rho}_{|Y})$, which yields the orthogonal projections (\cite{Sch})
\begin{align*}
\begin{array}{cc}
\mathcal{R}: \Omega(X, E_\rho)_{|Y}  \to \Omega(Y, {E_\rho}_{|Y}) & \qquad \mathcal{A}: \Omega(X, E_\rho)_{|Y} \to \Omega(Y, {E_\rho}_{|Y})\\
\omega_{|Y}  \mapsto \omega_1 &\qquad \omega_{|Y} \mapsto \omega_2
\end{array}
\end{align*}
 $\Omega^k(X, E_\rho)_{|Y}$ as  the space of
boundary restrictions of smooth $k$-forms. Since Green's formula yields ((2.8), \cite{DaFa}):
\begin{align}\label{absolrelboundcondforlaplacian}
\langle \Delta_k \omega, \theta\rangle_X= \langle \omega,  \Delta_k\theta\rangle_X+\int_Y \omega \wedge * d \theta - \int_Y \theta \wedge * d \omega+\int_Y \delta \omega \wedge * \theta- \int_Y\delta \theta \wedge * \omega,
\end{align}
complex powers and a (pseudodifferential) logarithm are defined from {$\Delta_k: \Omega^k(X, E_\rho) \to \Omega^k(X, E_\rho)$} endowed with, as in \cite{DaFa} (\S 2.1),  relative or absolute boundary conditions 
\begin{align*}
\text{Relative: }\begin{cases}
\mathcal{R}\omega_{|Y}=0 & \\
 \mathcal{R}  \delta \omega_{|Y}=0 & 
\end{cases} \qquad \text{Absolute: }\begin{cases}\mathcal{A}\omega_{|Y}=0 & \\
 \mathcal{A}  d \omega_{|Y}=0 & 
\end{cases}
\end{align*}

From (\ref{absolrelboundcondforlaplacian}),  the realization $\Delta_{k,B}$ for $B=\mathcal{R}$ or $B=\mathcal{A}$ conditions is self-adjoint and has a discrete set of non-negative eigenvalues accumulating at infinity, with a corresponding orthonormal basis of eigenvalues for $L^2(X,E_\rho)$ satisfying the boundary conditions (\S 3.3, \cite{Gr2}). 
The holomorphic family 
\begin{align*}
\Delta_{k,B}^{-s} := \frac{i}{2\pi}\int_{\Cc}   \lambda^{-s}\ (\Delta_{k,B} - \lambda)^{-1}d\lambda, \qquad \Re(s)>0,
\end{align*}
over a contour $\Cc$ in $\C\backslash \ \ol{\R}_-$  positively around the non-zero  eigenvalues of $\Delta_{k,B}$,
was defined by Seeley \cite{See1}, \cite{See2}, and is trace class for $\Re(s)>n/2$  \cite{See1}, while a logarithm
\begin{align}\label{logarithmoftherealization}
\log \Delta_{k,B} := \lim_{s \searrow 0} \frac{i}{2\pi}\int_C   \lambda^{-s}\log \lambda\ (\Delta_{k,B} - \lambda)^{-1}d\lambda
\end{align}
was studied by Grubb and Gaarde ((2.5) in \cite{GaGr}). (\ref{logarithmoftherealization})  equals $   (\log \widetilde{\Delta}_k)_+ + G^\text{log}$, 
where $(\log \widetilde{\Delta}_k)_+$ is the restriction to $X$ of the (classical) logarithm of the Laplacian on $k$-forms on $\widetilde{X}$ and $G^\text{log}$ is a singular Green operator. The local symbol of $(\log \widetilde{\Delta}_k)_+$ is analogous to (\ref{log symbol -j}), while $G^\text{log}$ has symbol-kernel of quasi-homogeneous terms satisfying part of the usual estimates for singular Green operators (Theorem   2.4, \cite{GaGr}).

As for the closed case, $\zeta_{k, B}(s):=\text{Tr}(\Delta_{k,B}^{-s})|^\text{mer}$, with Tr the classical trace,  is holomorphic at zero and thus is used to define the analytic torsion with absolute/relative boundary conditions 
\begin{align*}
\log T^{\z, \bf k}_{X, B}(\rho) = \frac{1}{2} \sum_{k=0}^n (-1)^{k} k\, \zeta_{\,k, B}'(0), \qquad B=\mathcal{R} \text{ or } \mathcal{A}.
\end{align*}
Since $\text{Tr}(\Delta_{k,B}^{-s})$ is holomorphic for $\Re(s)>n/2$, 
$\frac{d}{ds}\text{Tr}(\Delta_{k,B}^{-s})=\text{Tr}(\frac{d}{ds}\Delta_{k,B}^{-s})$ for such $s$. So, by expansion (1.12) in \cite{GrSc}, both $\text{Tr}(\Delta_{k,B}^{-s})$ and $\text{Tr}(\frac{d}{ds}\Delta_{k,B}^{-s})$ can be extended meromorphically and are holomorphic at $s=0$, which yields 
$\frac{d}{ds}\zeta_{\Delta_{k,B}}^{}(0)=- \TR_\z(\log\Delta_{k,B})$ as for the boundaryless case. Hence
\begin{align*}
\log T^{\z, \bf k}_{X, B}(\rho) = \frac{1}{2} \sum_{k=0}^n (-1)^{k+1} k \,\TR_\z(\log\Delta_{k,B}), \qquad B=\mathcal{R} \text{ or } \mathcal{A}.
\end{align*}
Similarly, $\log T^{\z, \bf1}_{X,B}(\rho):= \frac{1}{2} \sum_{k=0}^n (-1)^k \,  \zeta^\prime_{k, B} (0)=\frac{1}{2} \sum_{k=0}^n (-1)^k \,\TR_\z(\log\Delta_{k,B})$.

 $\log \Delta_{k,B}$ belongs to the Boutet the Monvel calculus \cite{GaGr}. There, the residue trace has been extended by work of Fedosov, Golse, Leichtnam, and Schrohe \cite{FeGoLeSc} and is the unique trace.  Hence, we have a well-defined $\res(\log \Delta_{k,B})$, which we can use to define a  (generalized) residue analytic torsion of $X$ with either relative or absolute boundary conditions
\begin{align}\label{residuetorsionwhenwehaveboundary}
\log T^{\res, \beta}_{X, B}(\rho)=\frac{1}{2} \sum_{k=0}^n (-1)^{k+1} \beta_k\ \res (\log \Delta_{k,B}).
\end{align}
To see that $\log T_{X,B}^{\res, \b}(\rho)$ is independent of the Riemannian metric  if and only if  $\beta$ equals
${\bf 1} :=(1, \dots, 1)$  or ${\bf k} := (0,1, \dots, n)$ we have, similarly to the boundaryless case, from \cite{Gr} that
\begin{align}\label{ScottsformulabutnowGrubb}
- \frac{1}{2} \res (\log \Delta_{k,B})= \zeta_{k, B}(0) + \dim \ker (\Delta_{k,B}),
\end{align}
as relative/absolute boundary conditions are normal, and so
\begin{align*}
\log T^{\res, \beta}_{X, B}(\rho)= \sum_{k=0}^n (-1)^{k} \beta_k\ \zeta_{k, B}(0) + \sum_{k=0}^n (-1)^{k} \beta_k\ \dim \ker (\Delta_{k,B}).
\end{align*}
Let $u\in [0,1]  \mapsto g^X(u)$ be  a smooth path of metrics  for which the normal direction to the boundary $Y$ is the same and consider $\frac{d}{du}\log T^{\res, \beta}_{X, B}(\rho)$. Since $\ker (\Delta_{k,B})$ is isomorphic to relative/absolute de Rham cohomology, it is independent of the metric (see for instance the proof of Proposition 6.4, \cite{RaSi}, or (2.5) in \cite{Vi}) and the derivative reduces to
\begin{align*}
\frac{d}{du}\log T^{\res, \beta}_{X, B}(\rho)= \frac{d}{du}\sum_{k=0}^n (-1)^{k} \beta_k\ \zeta_{k, B}(0).
\end{align*}
Without loss of generality, as we are considering the residue trace, take $\Delta_{k,B}$ to be invertible. Then  $\zeta_{k, B}(s)=\int_0^\infty t^{s-1} \text{Tr}(e^{-t \Delta_{k, B}(u)})dt$ and we can study the derivative at $s=0$ of
\begin{align*}
f(u,s):= \sum_{k=0}^n (-1)^{k} \beta_k\ \int_0^\infty t^{s-1} \text{Tr}(e^{-t \Delta_{k, B}(u)})dt.
\end{align*}
By Theorem 6.1 in \cite{RaSi} $$\frac{\partial}{\partial u}\text{Tr}(e^{-t \Delta_{k, B}})=-t\, \text{Tr}((\delta \alpha_k d -d\alpha_k\delta+\alpha_k d \delta - \alpha_k \delta d)e^{-t \Delta_{k, B}})$$ and, by the proof of Proposition 2.15 in \cite{Vi}, we can differentiate under the integral sign, thus obtaining
\begin{align*}
\frac{\partial}{\partial u} f(u,s)  =\sum_{k=0}^n (-1)^{k+1} \beta_k\ \int_0^\infty t^{s} \,\text{Tr}((\delta \alpha_k d -d\alpha_k\delta+\alpha_k d \delta - \alpha_k \delta d)e^{-t \Delta_{k, B}})dt.
\end{align*}
Moreover,  by Theorem 7.3 in \cite{RaSi}, $$\text{Tr}(d\alpha_k\delta e^{-t \Delta_{k, B}})= \text{Tr}(\alpha_k\delta d e^{-t \Delta_{k-1, B}})\ \ {\rm and} \ \  \text{Tr}(\delta \alpha_k d  e^{-t \Delta_{k, B}})=\text{Tr}( \alpha_k d \delta e^{-t \Delta_{k+1, B}}).$$ Thus, setting $\theta_k:=\text{Tr}(\alpha_k\delta d e^{-t \Delta_{k, B}})$ and $\varphi_k:=\text{Tr}( \alpha_k d \delta e^{-t \Delta_{k, B}})$, we obtain
\begin{align*}
\frac{\partial}{\partial u} f(u,s)  =\sum_{k=0}^n (-1)^{k+1} \beta_k\ \int_0^\infty t^{s} (\varphi_{k+1} - \theta_k + \varphi_k - \theta_{k-1})dt
\end{align*}
 and hence, by manipulating as in the closed manifold case and integrating by parts, 
\begin{align*}
\frac{\partial}{\partial u}f(u,s)& =  \sum_{k=1}^{n}(-1)^{k}(\beta_{k+1} - 2\beta_k + \beta_{k-1})s\zeta(\alpha_k d \delta\Delta_{k, B}^{-1},\Delta_{k, B}, s)\\
&+  \sum_{k=0}^{n}(-1)^{k+1}(\beta_{k+1}-\beta_k) s\zeta(\alpha_k,\Delta_{k, B}, s),
\end{align*}
with, as before, the $k=0$ and $k=n$ cases needing separate treatment but which both vanish. By (1.14) in \cite{GrSc}
\begin{align*}
\frac{\partial}{\partial u}f(u,0)&= \frac{1}{2} \sum_{k=1}^{n}(-1)^{k}(\beta_{k+1} - 2\beta_k + \beta_{k-1})\res(\alpha_k d \delta\Delta_{k, B}^{-1})\\
&+ \frac{1}{2}  \sum_{k=0}^{n}(-1)^{k+1}(\beta_{k+1}-\beta_k) \res(\alpha_k).
\end{align*}
Since $\alpha_k$ is the usual multiplication operator, $\res(\alpha_k)=0$ and the right hand side vanishes as for the closed manifold case if and only if  $\beta_{k+1} - 2\beta_k + \beta_{k-1}=0$ for each non-negative integer $k$. Proceeding similarly for the metric on $E_\rho$, we thus obtain that the torsions 
$T^{\res, \beta}_{X,\, \mathcal{A}}(\rho), T^{\res, \beta}_{X, \mathcal{R}}(\rho), T^{\zeta, \beta}_{X, \,\mathcal{A}}(\rho), T^{\zeta, \beta}_{X, \mathcal{R}}(\rho)$ are smooth invariants precisely in the cases $\beta= {\bf 1}$ or ${\bf k}$. 

For the exact formulae \eqref{antorsionunweighted} - \eqref{leading trace torsion weighted 2} we proceed as follows. Since $*\mathcal{R}=\mathcal{A}*$, we have \begin{align}\label{unfortunatelynottrueanumoreforboundary10}
\zeta_{k, \mathcal{R}}(s)=\zeta_{n-k, \mathcal{A}}(s)
\end{align}
and, as both $\Delta_{k, \mathcal{R}}$ and $\Delta_{k, \mathcal{A}}$  have a discrete set of non-negative eigenvalues accumulating at infinity,  we can adapt the argument of Theorem 2.3 in \cite{RaSi} to see that:
\begin{prop}
\begin{align}\label{unfortunatelynottrueanumoreforboundary}
\sum_{k=0}^n(-1)^k k\zeta_{k, \mathcal{R}}(s) = (-1)^{n-1} \sum_{k=0}^n(-1)^k k \zeta_{k, \mathcal{A}}(s).
\end{align}
\end{prop}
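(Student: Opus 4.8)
The plan is to reduce \eqref{unfortunatelynottrueanumoreforboundary} to a single extra input — that the \emph{unweighted} alternating sum of the boundary zeta functions vanishes, $\sum_{k=0}^n(-1)^k\zeta_{k,B}(s)=0$ for $B=\mathcal{R}$ and for $B=\mathcal{A}$, the boundary analogue of \eqref{ii}, \eqref{zeta vanishing 1} — after which the Proposition follows from \eqref{unfortunatelynottrueanumoreforboundary10} by pure index shifting, just as \eqref{i} was deduced from Poincar\'e duality in the closed case.

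First I would establish this vanishing by the McKean--Singer supersymmetry argument, now carried out with $B$-type boundary conditions. Let $\mathcal{D}_B$ be the realization of $d+\delta$ on $\Omega^\bullet(X,E_\rho)$ carrying the first-order boundary condition of type $B$ (vanishing of the normal part of $\omega_{|Y}$ for $B=\mathcal{A}$, of the tangential part for $B=\mathcal{R}$). This realization is self-adjoint, commutes with the $\mathbb{Z}/2$-grading operator on $\Omega^\bullet$, and squares to $\bigoplus_k\Delta_{k,B}$; being odd for the grading it is off-diagonal, with upper-right block the restriction $A$ to even-degree forms, so that $A^*A=\bigoplus_{k\ {\rm even}}\Delta_{k,B}$ and $AA^*=\bigoplus_{k\ {\rm odd}}\Delta_{k,B}$. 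Since $A^*A$ and $AA^*$ have identical non-zero spectrum with multiplicities, their zeta functions agree, i.e. $\sum_{k\ {\rm even}}\zeta_{k,B}(s)=\sum_{k\ {\rm odd}}\zeta_{k,B}(s)$, which is the claimed vanishing. (Equivalently: the heat supertrace $\sum_k(-1)^k\,{\rm Tr}(e^{-t\Delta_{k,B}})$ is $t$-independent and, by the Hodge theorem, equals $\sum_k(-1)^k\dim\ker\Delta_{k,B}$, so the two cancel in the Mellin transform defining $\sum_k(-1)^k\zeta_{k,B}$.) The meromorphy and regularity at $s=0$ needed here are the ones already recalled above from Seeley and Grubb.

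Granting this, the Proposition is immediate. Inserting $\zeta_{k,\mathcal{R}}(s)=\zeta_{n-k,\mathcal{A}}(s)$ from \eqref{unfortunatelynottrueanumoreforboundary10} into the left-hand side and re-indexing $j=n-k$, using $(-1)^{n-j}=(-1)^n(-1)^j$,
\begin{align*}
\sum_{k=0}^n(-1)^k k\,\zeta_{k,\mathcal{R}}(s)
&=(-1)^n\sum_{j=0}^n(-1)^j(n-j)\,\zeta_{j,\mathcal{A}}(s)\\
&=(-1)^n\Big(n\sum_{j=0}^n(-1)^j\zeta_{j,\mathcal{A}}(s)\ -\ \sum_{j=0}^n(-1)^j j\,\zeta_{j,\mathcal{A}}(s)\Big),
\end{align*}
and the first term vanishes by the step above (for $n\geq 1$; there is nothing to prove when $Y=\emptyset$), leaving exactly $(-1)^{n-1}\sum_{j=0}^n(-1)^j j\,\zeta_{j,\mathcal{A}}(s)$, which is \eqref{unfortunatelynottrueanumoreforboundary}.

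I expect the only genuine obstacle to be the first step: one must verify that $d+\delta$ with the chosen first-order boundary condition really is self-adjoint, really does preserve the parity splitting, and really does square to the $B$-realization of the Hodge Laplacian in each degree — that is, that the McKean--Singer cancellation survives the passage to manifolds with boundary. This is classical (it is what underlies the Gauss--Bonnet theorem with absolute/relative conditions and is implicit in the boundary sections of \cite{RaSi}), but it is the one place where the boundary operators must be handled by hand rather than formally; the remaining re-indexing is routine and formally identical to the closed-manifold computations \eqref{i}--\eqref{ii}.
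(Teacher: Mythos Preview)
Your argument is correct but proceeds in the opposite logical direction from the paper. The paper proves the Proposition \emph{directly} by adapting Ray--Singer's Theorem~2.3: for each nonzero eigenvalue $\lambda$ of $\Delta_{k,\mathcal{R}}$ it splits the eigenspace $\mathcal{E}_{k,\mathcal{R}}(\lambda)$ into closed and coclosed pieces $\mathcal{F}_{k,\mathcal{R}}(\lambda)\oplus\mathcal{G}_{k,\mathcal{R}}(\lambda)$ via the projections $\tfrac{1}{\lambda}d\delta$ and $\tfrac{1}{\lambda}\delta d$, uses $\tfrac{1}{\sqrt{\lambda}}d$ to identify $\mathcal{G}_{k,\mathcal{R}}(\lambda)\cong\mathcal{F}_{k+1,\mathcal{R}}(\lambda)$, and then uses $*\mathcal{R}=\mathcal{A}*$ to identify $\mathcal{F}_{k,\mathcal{R}}(\lambda)\cong\mathcal{G}_{n-k,\mathcal{A}}(\lambda)$; the weighted sum is then manipulated term by term. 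Only \emph{afterwards} does the paper deduce the unweighted vanishing $\sum_k(-1)^k\zeta_{k,B}(s)=0$ (its equation \eqref{unweighedzetavanishes}) as a corollary of the Proposition combined with \eqref{unfortunatelynottrueanumoreforboundary10}. You instead establish the unweighted vanishing first, via McKean--Singer for the self-adjoint realization of $d+\delta$, and then obtain the Proposition by the short re-indexing computation using \eqref{unfortunatelynottrueanumoreforboundary10}. Your route is cleaner algebraically and more conceptual, but --- as you rightly flag --- it front-loads the analytic work of checking that the first-order $B$-realization of $d+\delta$ is self-adjoint, odd for the parity grading, and squares exactly to $\bigoplus_k\Delta_{k,B}$; the paper's eigenspace argument sidesteps this entirely by working only with $\Delta_{k,B}$ itself.
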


\begin{proof}
Let $\lambda \neq 0$ be an eigenvalue for $\Delta_{k, \mathcal{R}}$ with associated eigenspace  
\begin{align*}
\mathcal{E}_{k, \mathcal{R}}(\lambda)=\{\omega \in \Omega^k(X,E_\rho)|\ \Delta\omega=\lambda\omega, \mathcal{R}\gamma\omega=\mathcal{R}\gamma\delta\omega=0\}.
\end{align*}
Then
$\Lambda_k'(\lambda)=\frac{1}{\lambda}d\delta$  and $\Lambda_k''(\lambda)=\frac{1}{\lambda}\delta d$
are respective orthogonal projections of $\mathcal{E}_{k, \mathcal{R}}(\lambda)$ onto $\mathcal{F}_{k, \mathcal{R}}(\lambda)=\{\omega \in \mathcal{E}_{k, \mathcal{R}}(\lambda)|\ d\omega=0\}$ and $\mathcal{G}_{k, \mathcal{R}}(\lambda)=\{\omega \in \mathcal{E}_{k, \mathcal{R}}(\lambda)|\ \delta\omega=0\}$. Also, by construction, ${\Lambda_k'(\lambda)+\Lambda_k''(\lambda)=I}$. Since the map  $\frac{1}{\sqrt{\lambda}}d$ is an isomorphism with  inverse $\frac{1}{\sqrt{\lambda}}\delta$, we conclude $\mathcal{G}_{k, \mathcal{R}}(\lambda) \cong \mathcal{F}_{k+1, \mathcal{R}}(\lambda)$ and thence
\begin{align*}
g_{k, \mathcal{R}}(\lambda)=|\mathcal{G}_{k, \mathcal{R}}(\lambda)|=| \mathcal{F}_{k+1, \mathcal{R}}(\lambda)|=f_{k+1, \mathcal{R}}(\lambda).
\end{align*} 
Therefore
\begin{align*}
\zeta_{k, \mathcal{R}}(s)&=\sum_{\lambda\neq 0} \lambda^{-s}|\mathcal{E}_{k, \mathcal{R}}(\lambda)|=\sum_{\lambda\neq 0} \lambda^{-s}(f_{k, \mathcal{R}}(\lambda) + f_{k+1, \mathcal{R}}(\lambda))\\
&=\sum_{\lambda\neq 0} \lambda^{-s}(g_{k, \mathcal{R}}(\lambda) + g_{k-1, \mathcal{R}}(\lambda)) \qquad 	\text{ and}\\
\sum_{k=0}^n(-1)^k k \zeta_{k, \mathcal{R}}(s)&=\sum_{k=1}^n(-1)^k  \sum_{\lambda\neq 0} \lambda^{-s}f_{k, \mathcal{R}}(\lambda)=-\sum_{k=0}^{n-1}(-1)^k  \sum_{\lambda\neq 0} \lambda^{-s}g_{k, \mathcal{R}}(\lambda).
\end{align*}
 $*\mathcal{R}=\mathcal{A}*$ yields  $\mathcal{F}_{k, \mathcal{R}}(\lambda)\cong \mathcal{G}_{n-k, \mathcal{A}}(\lambda)$ and therefore $f_{k, \mathcal{R}}(\lambda)=g_{n-k, \mathcal{A}}(\lambda)$, which proves  (\ref{unfortunatelynottrueanumoreforboundary10}). Thus, in conclusion
\begin{align*}
\sum_{k=0}^n(-1)^k k \zeta_{k, \mathcal{R}}(s)&=-\sum_{k=0}^{n-1}(-1)^k  \sum_{\lambda\neq 0} \lambda^{-s}g_{k, \mathcal{R}}(\lambda)=\sum_{k=1}^n(-1)^k  \sum_{\lambda\neq 0} \lambda^{-s}f_{k, \mathcal{R}}(\lambda)\\
&=\sum_{k=1}^n(-1)^k  \sum_{\lambda\neq 0} \lambda^{-s}g_{n-k, \mathcal{A}}(\lambda)=\sum_{k=0}^{n-1}(-1)^{n-k}  \sum_{\lambda\neq 0} \lambda^{-s}g_{k, \mathcal{A}}(\lambda)\\
&=(-1)^n\sum_{k=0}^{n-1}(-1)^k  \sum_{\lambda\neq 0} \lambda^{-s}g_{k, \mathcal{A}}(\lambda) =(-1)^{n-1} \sum_{k=0}^n(-1)^k k \zeta_{k, \mathcal{A}}(s).
\end{align*}
\end{proof} 

Combining  (\ref{unfortunatelynottrueanumoreforboundary10}) and (\ref{unfortunatelynottrueanumoreforboundary})  yields
\begin{align}\label{unweighedzetavanishes}
  \sum_{k=0}^n(-1)^k \zeta_{k, \mathcal{R}}(s)=(-1)^n  \sum_{k=0}^n(-1)^k \zeta_{k, \mathcal{A}}(s)= \frac{1}{n}\sum_{k=0}^n(-1)^k k\left( \zeta_{k, \mathcal{R}}(s) + (-1)^n\zeta_{k, \mathcal{A}}(s)\right) =0, 
\end{align}
which implies \eqref{antorsionunweighted}, while the  Euler  and derived Euler characteristics
\begin{align*}
\chi_\mathcal{B} (X,E_\rho):= \sum_{k=0}^n (-1)^k   \dim H^k_\mathcal{B}(X,E_\rho), \quad 
\chi_\mathcal{B}'(X,E_\rho):= \sum_{k=0}^n (-1)^k k  \dim H^k_\mathcal{A}(X,E_\rho),
\end{align*}
are identified using the Hodge Theorem on a compact manifold with boundary  $$\ker(\Delta_{k,\mathcal{R}}^{})\cong H^k_\mathcal{R}(X,E_\rho)\cong H^k(X,Y,E_\rho),\ \ \ \ \ \ker(\Delta_{k,\mathcal{A}}^{})\cong H^k_\mathcal{A}(X,E_\rho)\cong H^k(X,E_\rho),$$
 with 
\begin{align*}
\log T_{X,B}^{\,\res, {\bf 1}}(\rho) &  \stackrel{(\ref{ScottsformulabutnowGrubb})}{=}  \sum_{k=0}^{n} (-1)^k \zeta_{k, B}(0) + \sum_{k=0}^{n} (-1)^k \dim \ker(\Delta_{k,B}^{}) \stackrel{(\ref{unweighedzetavanishes})}{=} 0 + \chi_B(X,E_\rho),
\end{align*}
and 
\begin{align*}
\log T_{X,B}^{\,\res, {\bf k}}(\rho) & = \sum_{k=0}^{n} (-1)^k k\ \zeta_{k, B}(0) + \sum_{k=0}^{n} (-1)^k k \dim \ker(\Delta_{k,B}^{}) \\
&= \sum_{k=0}^{n} (-1)^k k\ \zeta_{k, B}(0)  + \chi_B'(X,E_\rho)
\end{align*}
which is \eqref{leading trace torsion weighted}.

Finally, as $\sum_{k=0}^{n} (-1)^k k \left(\zeta_{k, B}(0)  +\dim \ker(\Delta_{k,B}^{})\right)$ was shown to equal $\frac{1}{2} \dim (X) \,\chi_B(X, E_\rho)$ in  \cite{Vi} (Proposition 2.23), we have  \eqref{leading trace torsion weighted 2} and therefore (\ref{Tres2}) generalises to the boundary case for absolute/relative boundary conditions. 

Since the formulae on the right-hand side of \eqref{leading trace torsion} and \eqref{leading trace torsion weighted} are topological invariants of $(X,E_\rho)$ 
we may infer, as for the closed manifold case, that $T^{\res, \beta}_{X,\, \mathcal{A}}(\rho)$, $T^{\res, \beta}_{X, \mathcal{R}}(\rho)$,  are topological invariants of $(X,E_\rho)$  precisely in the cases $\beta= {\bf 1}$ or ${\bf k}$.  From \eqref{antorsionunweighted}, $T^{\zeta, {\bf 1}}_{X, \, B}(\rho)$ is trivially topological. Since $R_{X,\mathcal{A}}(\rho)$ is a topological invariant \cite{Chap} and $R_{X,\mathcal{R}}(\rho)$ is a PL-invariant \cite{Mi61}, then from \eqref{antorsionvishik} $T^{\zeta, {\bf k}}_{X, \mathcal{A}}(\rho)$ is topological while $T^{\zeta, {\bf k}}_{X, \mathcal{R}}(\rho)$ is a PL-invariant. Conversely, if  this holds then it holds smoothly and hence up to a constant multiple the only possibilities are $\beta = {\bf 1}$ or ${\bf k}$.

We remark that, unlike for even dimensional closed manifolds, $\sum_{k=0}^n(-1)^k  k\zeta_{k, \mathcal{R}}(s)$ need not vanish in general. For example, let $X=[0, R]$, where the eigenvalue problem for $\Delta_0=-\partial^2_x$ with relative boundary conditions is  the well-known harmonic oscillator with Dirichlet boundary conditions, whose  eigenvalues are $\lambda=\frac{n^2\pi^2}{R^2}$, $n \in \mathbb{N}$. Hence, for $\zeta_{\mathbf{R}}(s)$ the \emph{Riemann zeta function},
\begin{align*}
\zeta_{0, \mathcal{R}}(s)= 2\frac{R^{2s}}{\pi^{2s}}\sum_{n=1}^\infty n^{-2s}=2\frac{R^{2s}}{\pi^{2s}}\zeta_{\mathbf{R}}(2s).
\end{align*}
Consequently,
\begin{align*}
\sum_{k=0}^1 (-1)^k k \zeta_{k, \mathcal{A}}(s)= -  \zeta_{1, \mathcal{A}}(s) \stackrel{(\ref{unfortunatelynottrueanumoreforboundary10})}{=} -  \zeta_{0, \mathcal{R}}(s) = -2 \frac{R^{2s}}{\pi^{2s}}\zeta_{\mathbf{R}}(2s)
\end{align*}
is non-zero and $\sum_{k=0}^1 (-1)^k k \zeta_{k, \mathcal{A}}(0)= -2  \zeta_{\mathbf{R}}(0)  =1$. For an example in dimension 2, let $X$ be the cylinder $[0,R] \times S^1$, with  $x \in [0, R]$  the normal coordinate, and  $\Delta= -\partial^2_x + \Delta^{S^1}$. Since, $\zeta_{1, \mathcal{R}}(s)= \zeta_{0, \mathcal{R}}(s) + \zeta_{2, \mathcal{R}}(s)$ by (\ref{unweighedzetavanishes}), we obtain
$$\sum_{k=0}^2 (-1)^k k \zeta_{k, \mathcal{R}}(s)  = - \zeta_{1, \mathcal{R}}(s) + 2 \zeta_{2, \mathcal{R}}(s)  =\zeta_{2, \mathcal{R}}(s) - \zeta_{0, \mathcal{R}}(s)  \stackrel{(\ref{unfortunatelynottrueanumoreforboundary10})}{=}  \zeta_{0, \mathcal{A}}(s) - \zeta_{0, \mathcal{R}}(s)= \zeta_{0}^{S^1}(s),$$
where the last equality is shown in \cite{KiLoPa} (\S 3.2), as $\Delta_0$ with relative/absolute boundary conditions corresponds to the Laplacian on functions with Dirichlet/Neumann conditions. Hence, $\sum_{k=0}^2 (-1)^k k \zeta_{k, \mathcal{R}}(0) =\zeta_{0}^{S^1}(0)= 2\zeta_{\mathbf{R}}(0)=-1$.\\

From \cite{Vi}, Proposition 2.22, and (\ref{ScottsformulabutnowGrubb}), we have that the residue torsions satisfy the following gluing formula. Let $X=X_1 \cup_Y X_2$ with $Y \cap \partial X=\emptyset$, absolute/relative boundary conditions on $\partial X$, and relative boundary conditions on $Y$. Then
\begin{align}
    \log  T_{X,B}^{\,\res, {\bf k}}(\rho)= \log  T_{X_1,B}^{\,\res, {\bf k}}(\rho) + \log  T_{X_2,B}^{\,\res, {\bf k}}(\rho) + \log  T_{Y}^{\,\res, {\bf k}}(\rho) + \frac{1}{2}\chi(Y, {E_\rho}_{|Y}).
\end{align}
This completes the proof of \thmref{thm two}.

\vfive

{\small \noi \textsc{Department of Mathematics \\
King's College London }}}
\end{document}